\documentclass{amsart}

\usepackage{amsmath,amssymb,amsthm,amsfonts}
\usepackage[alphabetic]{amsrefs}
\usepackage{bm}
\usepackage{mathtools}
\usepackage{hyperref}

\usepackage[margin=1in]{geometry}

\theoremstyle{definition}
\newtheorem{thm}{Theorem}[section]

\newtheorem{lem}[thm]{Lemma}

\newtheorem{defi}[thm]{Definition}

\theoremstyle{remark}
\newtheorem{rem}[thm]{Remark}

\newtheorem*{acknowledgment}{Acknowledgments}

\numberwithin{equation}{section}
\numberwithin{figure}{section}

\DeclareMathOperator{\Id}{Id}
\DeclareMathOperator{\op}{op}

\def\co{\colon\thinspace}
\def\ot{\otimes}

\newcommand{\HD}{\mathcal{H}}
\newcommand{\cop}{\operatorname{cop}}

\newcommand{\eqbyref}[1]{\overset{\mathclap{\eqref{#1}\;}}{=\;}}

\title{A Quasi-Pentagon Equation for a Heisenberg Double of a Quasi-Hopf Algebra}

\author{Yohei Ota}
\address{Department of Mathematical and Computing Science, School of Computing,
	Institute of Science Tokyo,
	2-12-1, Ookayama, Meguro-ku, Tokyo 152-8552, Japan}
\email{ota.y.aj@m.titech.ac.jp}

\begin{document}
	\begin{abstract}
		For a finite-dimensional Hopf algebra $H$, the canonical elements of the Heisenberg doubles $\HD(H^\ast)$ and $\HD(H)$ satisfy the pentagon and Hopf equations, respectively. In this paper we construct quasi-Hopf analogues of these structures. For a finite-dimensional quasi-Hopf algebra $H$, we consider natural quasi-Hopf analogues $\HD_1(H^\ast)$ and $\HD_1(H)$ of $\HD(H^\ast)$ and $\HD(H)$. Although their canonical elements are defined just as in the Hopf algebra case, they need not be invertible. We prove that there nevertheless exist natural inverse-like elements. In $\HD_1(H^\ast)$, the canonical element satisfies a quasi-pentagon equation and its inverse-like element satisfies a quasi-Hopf equation, while in $\HD_1(H)$ the roles are reversed.
	\end{abstract}
	
	\maketitle
	
	\section{Introduction}
	
	For a finite-dimensional Hopf algebra $H$, the Heisenberg double
	$\HD(H^\ast)$
	of the dual Hopf algebra $H^\ast$ carries a canonical element $W$
	satisfying the pentagon equation (see \cite{Ka}\footnote{In Kashaev's notation \cite{Ka}, the algebra $\HD(H^\ast)$ is denoted by
		$H(H)$, and is called the Heisenberg double of $H$.}):
	\begin{equation*}
		W_{12}W_{13}W_{23}=W_{23}W_{12}.
	\end{equation*}
	Since $W$ is invertible, its inverse $\tilde{W}$ satisfies
	\begin{equation*}
		\tilde{W}_{23}\tilde{W}_{13}\tilde{W}_{12}=\tilde{W}_{12}\tilde{W}_{23}.
	\end{equation*}
	Following Militaru's terminology \cite{Mi}, we call an identity of this form the
	\emph{Hopf equation}. By \cite[Proposition~5.3]{Lu}, $\HD(H)$ is isomorphic to the opposite algebra $\HD(H^\ast)^{\op}$. It follows that the canonical element
	of $\HD(H)$ satisfies the Hopf equation, while its inverse satisfies the pentagon
	equation.
	
	These identities are useful for several reasons. First, Kashaev showed that the Drinfel'd double can be realized as a subalgebra of $\HD(H^\ast)\ot \HD(H)$, and that its universal $R$-matrix admits a factorized expression as a product of four canonical elements in this Heisenberg-double setting, two coming from $\HD(H^\ast)$ and $\HD(H)$ and two of mixed type. As a consequence, the Yang--Baxter equation
	can be derived from repeated use of the pentagon and Hopf equations \cite{Ka}.
	Second, pentagon relations arising from Heisenberg doubles also have topological
	applications. For instance, Mihalache, Suzuki, and Terashima constructed an invariant
	of closed oriented $3$-manifolds from the Heisenberg double of a finite-dimensional
	involutory Hopf algebra; in their construction, the pentagon equation is the local
	algebraic relation underlying invariance under the relevant moves \cites{MST,Su}.
	
	When $H$ is a quasi-Hopf algebra, several inequivalent generalizations of the Heisenberg
	double are known; see \cite{PO}. In \cites{BC,PO}, the first Heisenberg double
	$\HD_1(H)$ was introduced. In the present paper, we also need a quasi-Hopf analogue of
	the Heisenberg double $\HD(H^\ast)$ of the dual Hopf algebra $H^\ast$. Although the
	definition of this object is naturally suggested by that of $\HD_1(H)$, we define it
	explicitly in Section~3 and denote it by $\HD_1(H^\ast)$. In the Hopf algebra case, once
	$\HD(H)$ is defined, $\HD(H^\ast)$ is obtained by the same construction, since $H^\ast$
	is again a finite-dimensional Hopf algebra. For a finite-dimensional quasi-Hopf algebra
	$H$, however, the linear dual $H^\ast$ is in general not a quasi-Hopf algebra; instead,
	it carries the structure of a dual quasi-Hopf algebra. Accordingly, in the quasi-Hopf
	setting $\HD_1(H^\ast)$ and $\HD_1(H)$ must be defined separately rather than by applying to $H^\ast$ the same formal construction that defines $\HD_1(H)$. Here $\HD_1(H^\ast)$ is an algebra
	object in the monoidal category $\mathcal M_H$ of right $H$-modules, while
	$\HD_1(H)$ is an algebra object in the monoidal category ${}_H\mathcal M$ of left
	$H$-modules. (For comparison, the \emph{second} Heisenberg double $\HD_2(H)$ in
	\cite{PO} is an algebra object in $\mathcal M_H$, while the \emph{third} Heisenberg
	double $\HD_3(H)$ considered by Panaite \cite{Pa} is an algebra object in the bimodule
	category ${}_{D(H)}\mathcal M_{D(H)^{\cop}}$, where $D(H)$ is the Drinfel'd double of
	$H$.) Since $H$ is a quasi-Hopf algebra, these multiplications are in general not
	associative.
	
	In this paper, we construct quasi-Hopf analogues of the four identities appearing above.
	By quasi-Hopf analogues, we mean versions of these equations modified by correction terms arising from
	the associator $\Phi$; we call the resulting identities the \emph{quasi-pentagon
	equations} and the \emph{quasi-Hopf equations}. We emphasize that, when $H$ is a Hopf
	algebra, these four equations follow from a single relation, namely the pentagon equation
	of the Heisenberg double, together with the invertibility of the canonical element, as
	explained above. In contrast, in the quasi-Hopf setting the existence of such equations is
	not automatic and cannot be deduced from a single relation. In particular, the canonical
	elements are defined in exactly the same natural way as in the Hopf algebra case, but
	they need not be invertible (see Section~5 for an explicit example). Thus, in order to formulate the quasi-Hopf equation for $\HD_1(H^\ast)$ and the
	quasi-pentagon equation for $\HD_1(H)$, we construct elements playing the
	roles of inverses of the canonical elements. If
	$\{\bm{e}_i\}_{i\in I}$ is a basis of $H$ and $\{\bm{e}^i\}_{i\in I}$ its dual basis,
	then the canonical elements are
	\begin{equation*}
		W=\sum_{i \in I} \varepsilon \# \bm{e}_i \ot \bm{e}^i \# 1_H
		\in {\HD_1(H^\ast)}^{\ot 2},
		\qquad
		\bar{W}=\sum_{i \in I} \bm{e}_i \# \varepsilon \ot 1_H \# \bm{e}^i
		\in {\HD_1(H)}^{\ot 2}.
	\end{equation*}
	We show that there exist natural elements
	$\tilde{W}\in {\HD_1(H^\ast)}^{\ot 2}$ for $W$ and
	$\hat{W}\in {\HD_1(H)}^{\ot 2}$ for $\bar{W}$, which we call quasi-inverses
	of the canonical elements, and which reduce to the genuine inverses in the Hopf algebra
	case. We also show that
	\begin{equation*}
		W_{12}W_{13}W_{23}=W_{23}W_{12}\boldsymbol{\Phi}^{-1},
		\qquad
		\tilde{W}_{23}\tilde{W}_{13}\tilde{W}_{12}
		=\boldsymbol{\Phi}^{321}_{S}\tilde{W}_{12}\tilde{W}_{23},
	\end{equation*}
	in $\HD_1(H^\ast)$, and
	\begin{equation*}
		\bar{W}_{23}\bar{W}_{13}\bar{W}_{12}
		=(\overline{\boldsymbol{\Phi}}^{-1})^{321}\bar{W}_{12}\bar{W}_{23},
		\qquad
		\hat{W}_{12}\hat{W}_{13}\hat{W}_{23}
		=\hat{W}_{23}\hat{W}_{12}\overline{\boldsymbol{\Phi}}_S,
	\end{equation*}
	in $\HD_1(H)$, where
	$\boldsymbol{\Phi}^{-1},\boldsymbol{\Phi}^{321}_{S},
	(\overline{\boldsymbol{\Phi}}^{-1})^{321},\overline{\boldsymbol{\Phi}}_S$
	are correction terms determined by the associator and the antipode; see
	Theorems~\ref{thm:4.4} and \ref{thm:4.5} for their explicit formulas. We call the first
	and fourth identities the quasi-pentagon equations, and the second and third identities
	the quasi-Hopf equations. In particular, when $H$ is an ordinary Hopf algebra, all
	correction terms disappear and $\tilde{W}$ and $\hat{W}$ become the usual inverses of
	the canonical elements.
	
	A further motivation for the present paper comes from the class of quasi-Hopf algebras of the form $k^{\omega}(G)$, where $k$ is a field, $G$ is a finite group, and $\omega$ is a normalized $3$-cocycle on $G$ with values in $k^\times$. We refer to \cite{DPR} for the twisted quantum double $D^{\omega}(G)$ associated with a finite group and a $3$-cocycle, and to \cite{DW} for the related finite-group and group-cohomological constructions in topological gauge theory. In this setting, one is naturally led to consider an embedding of the twisted quantum double $D^{\omega}(G)$ into $\HD_1({k^{\omega}(G)}^{\ast})\ot \HD_1(k^{\omega}(G))$, extending Kashaev's embedding \cite{Ka} to the twisted case. A related construction for the twisted quantum double will appear elsewhere \cite{Ota}. Under such an embedding, the two sides of the quasi-Yang--Baxter equation should be related by repeated use of the quasi-pentagon and quasi-Hopf equations. This indicates that, at least for this particular class of quasi-Hopf algebras, the quasi-pentagon and quasi-Hopf equations play a role analogous to that played by the pentagon and Hopf equations in the Hopf algebra case, where they are related to Yang--Baxter-type equations through Kashaev-type embeddings. Whether similar constructions exist for more general quasi-Hopf algebras, or whether they lead to genuinely new topological constructions such as $3$-manifold invariants, remains an interesting open problem.
	
	\section{Preliminaries}
	
	Throughout this paper, $k$ is a field; all algebras, vector spaces, and maps are
	assumed to be $k$-linear. We write $\ot$ for $\ot_k$.
	
	Following Drinfel'd \cite{Dr}, we recall the definitions of quasi-bialgebras and
	quasi-Hopf algebras.
	
	\begin{defi}
		Let $H$ be a unital associative algebra, let $\Phi\in H\ot H\ot H$ be an invertible element,
		and let $\Delta\co H\to H\ot H$ and $\varepsilon\co H\to k$ be algebra maps.
		We say that $(H,\Delta,\varepsilon,\Phi)$ is a \emph{quasi-bialgebra} if for all $h\in H$,
		\begin{gather}\label{eq:2.1}
			(\Id_H \ot \Delta)(\Delta(h))=\Phi(\Delta\ot \Id_H)(\Delta(h))\Phi^{-1},\\
			\label{eq:2.2}
			(\Id_H \ot \varepsilon)(\Delta(h))=h=(\varepsilon \ot \Id_H)(\Delta(h)),\\
			\label{eq:2.3}
			(1_H \ot \Phi)(\Id_H \ot \Delta \ot \Id_H)(\Phi)(\Phi\ot 1_H)
			=(\Id_H \ot \Id_H \ot \Delta)(\Phi)(\Delta \ot \Id_H \ot \Id_H)(\Phi),\\
			\label{eq:2.4}
			(\Id_H \ot \varepsilon \ot \Id_H)(\Phi)=1_H\ot1_H.
		\end{gather}
		Using \eqref{eq:2.2}--\eqref{eq:2.4}, one can also show that
		\begin{equation*}
			(\varepsilon \ot \Id_H \ot \Id_H)(\Phi)=1_H\ot 1_H,
			\qquad
			(\Id_H \ot \Id_H \ot \varepsilon)(\Phi)=1_H\ot 1_H.
		\end{equation*}
		The element $\Phi$ is called the \emph{associator}, $\Delta$ the \emph{coproduct},
		and $\varepsilon$ the \emph{counit}.
		
		Let $(H,\Delta,\varepsilon,\Phi)$ be a quasi-bialgebra.
		Let $S\co H\to H$ be an anti-algebra map and let $\alpha,\beta\in H$.
		We say that $(H,\Delta,\varepsilon,\Phi,\alpha,\beta,S)$ is a \emph{quasi-Hopf algebra} if
		for all $h\in H$,
		\begin{alignat}{2}\label{eq:2.5}
			&\sum S(h_1)\alpha h_2=\varepsilon(h)\alpha,
			&\qquad
			&\sum h_1 \beta S(h_2)=\varepsilon(h)\beta,\\
			\label{eq:2.6}
			&\sum X^1\beta S(X^2) \alpha X^3=1_H,
			&\qquad
			&\sum S(x^1)\alpha x^2 \beta S(x^3)=1_H.
		\end{alignat}
		We use Sweedler's notation $\Delta(h)=\sum h_1\ot h_2$ and write
		$\Phi=\sum X^1\ot X^2\ot X^3$ and $\Phi^{-1}=\sum x^1\ot x^2\ot x^3$.
		Henceforth, we omit the summation signs in Sweedler's notation and for tensor components of the associator.
	\end{defi}
	
	As usual, we use independent copies of tensor components:
	\begin{equation*}
		\Phi=X^1\ot X^2\ot X^3 = Y^1\ot Y^2\ot Y^3 = \cdots,
		\qquad
		\Phi^{-1}=x^1\ot x^2\ot x^3 =y^1\ot y^2\ot y^3 = \cdots.
	\end{equation*}
	
	Since $\Delta$ is not coassociative, we use the notation
	\begin{equation*}
		(\Delta\ot \Id_H)(\Delta(h))=h_{(1,1)}\ot h_{(1,2)}\ot h_2,
		\qquad
		(\Id_H \ot \Delta)(\Delta(h))=h_1 \ot h_{(2,1)} \ot h_{(2,2)}.
	\end{equation*}
	We write $\Delta^{\cop}(h)=h_2\ot h_1$ for the opposite coproduct.
	
	\begin{defi}
		Let $H=(H,\Delta,\varepsilon,\Phi,\alpha,\beta,S)$ be a quasi-Hopf algebra.
		Define $\gamma,\delta\in H\ot H$ by
		\begin{equation*}
			\gamma:=S(X^2x^1_2)\alpha X^3x^2\ot S(X^1x^1_1)\alpha x^3
			= S(x^1X^2)\alpha x^2X^3_1\ot S(X^1)\alpha x^3X^3_2,
		\end{equation*}
		\begin{equation*}
			\delta:=X^1_1x^1\beta S(X^3)\ot X^1_2x^2\beta S(X^2x^3)
			= x^1\beta S(x^3_2X^3)\ot x^2X^1\beta S(x^3_1X^2).
		\end{equation*}
		The following invertible element $f\in H\ot H$ is called the \emph{Drinfel'd twist}
		associated to $H$:
		\begin{equation*}
			f=(S\ot S)(\Delta^{\cop}(x^1))\gamma \,\Delta(x^2\beta S(x^3)).
		\end{equation*}
		Its inverse is given by
		\begin{equation*}
			f^{-1}=\Delta(S(x^1)\alpha x^2)\,\delta\,(S\ot S)(\Delta^{\cop}(x^3)).
		\end{equation*}
		We also write
		\begin{equation*}
			f = f^{1}\ot f^{2} = F^{1}\ot F^{2}=\cdots,
			\qquad
			f^{-1} = g^{1}\ot g^{2} = G^{1}\ot G^{2}=\cdots.
		\end{equation*}
	\end{defi}
	
	The Drinfel'd twist satisfies \cite{Dr}:
	\begin{equation}\label{eq:2.7}
		f\Delta(S(h))f^{-1}=(S\ot S)(\Delta^{\cop}(h)),\qquad \forall h \in H.
	\end{equation}
	\begin{equation}\label{eq:2.8}
		(1_H\ot f)(\Id_H \ot \Delta)(f)\Phi(\Delta\ot \Id_H)(f^{-1})(f^{-1}\ot 1_H)
		=(S\ot S\ot S)(X^3\ot X^2\ot X^1).
	\end{equation}
	
	The following elements appear frequently in computations:
	\begin{equation}\label{eq:2.9}
		q_R=X^1\ot S^{-1}(\alpha X^3)X^2,\qquad p_L=X^2S^{-1}(X^1\beta)\ot X^3.
	\end{equation}
	We use the notations
	\begin{equation*}
		q_R= q^1\ot q^2 =Q^1\ot Q^2 =\cdots,
		\qquad
		p_L=\tilde{p}^1\ot \tilde{p}^2=\tilde{P}^1\ot \tilde{P}^2=\cdots.
	\end{equation*}
	
	For $q_R,p_L$, the Drinfel'd twist $f$, its inverse $f^{-1}$, and $h\in H$,
	the following identities are known \cites{HN1,HN2}:
	\begin{gather}\label{eq:2.10}
		(1_H\ot S^{-1}(h_2))\,q_R\,\Delta(h_1)=(h\ot 1_H)\,q_R,\\
		\label{eq:2.11}
		\Delta(h_2)\,p_L\,(S^{-1}(h_1)\ot 1_H)=p_L\,(1_H\ot h),\\
		\label{eq:2.12}
		q^{1} Q_{1}^1 x^{1} \ot q^{2} Q_{2}^1 x^{2} \ot Q^{2} x^{3}
		= q^{1} X_{1}^{1} \ot S^{-1}(f^{2} X^{3})q_{1}^{2} X_{(2,1)}^{1}
		\ot S^{-1}(f^{1} X^{2})q_{2}^{2} X_{(2,2)}^{1},\\
		\label{eq:2.13}
		x^{1} \tilde{p}^{1} \ot x^{2} \tilde{p}_{1}^{2} \tilde{P}^{1}
		\ot x^{3} \tilde{p}_{2}^{2} \tilde{P}^{2}
		= X_{(1,1)}^{3} \tilde{p}_1^{1} S^{-1}(X^{2} g^{2})
		\ot X_{(1,2)}^{3} \tilde{p}_{2}^{1} S^{-1}(X^{1} g^{1})
		\ot X_{2}^{3} \tilde{p}^{2}.
	\end{gather}
	
	\section{The first Heisenberg doubles}\label{sec:HDdef}
	
	In this section, we introduce the first Heisenberg doubles $\HD_1(H^\ast)$ and
	$\HD_1(H)$ for a quasi-Hopf algebra $H$. As explained in the Introduction, these two
	algebras must be defined separately. Here, $\HD_1(H^\ast)$ is an algebra in the
	monoidal category $\mathcal{M}_H$ of right $H$-modules, whereas $\HD_1(H)$ is an
	algebra in the monoidal category ${}_H\mathcal{M}$ of left $H$-modules. Unlike in the
	Hopf case, both are generally nonassociative as ordinary algebras.
	
	We use the standard left and right actions of $H$ on $H^\ast$:
	for $h,a\in H$ and $\xi\in H^\ast$,
	\begin{equation*}
		(h\rightharpoonup \xi)(a):=\xi(ah),
		\qquad
		(\xi \leftharpoonup h)(a):=\xi(ha).
	\end{equation*}
	We denote by $\ast$ the convolution product on $H^\ast$.
	
	\begin{defi}
		Let $H$ be a quasi-Hopf algebra.
		Define a multiplication and a right $H$-action on $H^\ast\ot H$ by
		\begin{align}
			\label{eq:HD1Hstar-mult}
			(\xi \# a)(\nu \# b)
			&:=(x^1 \rightharpoonup \xi)\ast(x^2a_1 \rightharpoonup \nu)\#x^3a_2b,
			&&\xi \# a,\nu \# b \in H^\ast\ot H, \\
			\label{eq:HD1Hstar-act}
			(\xi \# a)\triangleleft h
			&:= (\xi \leftharpoonup h)\# a,
			&&\xi \# a\in H^\ast\ot H,\ h \in H.
		\end{align}
		We denote the resulting object by $\HD_1(H^\ast)$ and call it
		the \emph{first Heisenberg double of $H^\ast$}. We write $\xi\ot a$ as $\xi\# a$.
	\end{defi}
	
	In particular,
	\begin{align*}
		(\xi \# a)(\varepsilon \# b)&=\xi\#ab,\\
		(\varepsilon \# a)(\nu \# b)&=(a_1 \rightharpoonup \nu)\#a_2b.
	\end{align*}
	
	\begin{defi}
		Let $H$ be a quasi-Hopf algebra.
		Define a multiplication and a left $H$-action on $H\ot H^\ast$ by
		\begin{align}
			\label{eq:HD1H-mult}
			(a \# \xi)(b \# \nu)
			&:=ab_1x^1\#(\xi \leftharpoonup b_2x^2)\ast(\nu \leftharpoonup x^3),
			&&a \# \xi,b \# \nu \in H\ot H^\ast, \\
			\label{eq:HD1H-act}
			h \triangleright (a \# \xi)
			&:= a \# (h \rightharpoonup \xi),
			&&a \# \xi\in H\ot H^\ast,\ h \in H.
		\end{align}
		We denote the resulting object by $\HD_1(H)$ and call it
		the \emph{first Heisenberg double of $H$}. We write $a\ot \xi$ as $a\#\xi$.
	\end{defi}
	
	In particular,
	\begin{align*}
		(a \# \varepsilon)(b \# \nu)&=ab\#\nu, \\
		(a \# \xi)(b \# \varepsilon)&=ab_1\#(\xi \leftharpoonup b_2).
	\end{align*}
	
	\section{Quasi-pentagon and quasi-Hopf equations}
	
	In this section we prove the corrected pentagon- and Hopf-type identities for the first Heisenberg doubles introduced in Section~\ref{sec:HDdef}. More precisely, in $\HD_1(H^\ast)$ the canonical element $W$ satisfies the quasi-pentagon equation and its quasi-inverse $\tilde{W}$ satisfies the quasi-Hopf equation, whereas in $\HD_1(H)$ the roles are reversed: $\bar{W}$ satisfies the quasi-Hopf equation and $\hat{W}$ satisfies the quasi-pentagon equation. Lemma~\ref{lem:4.1} collects the identities for $U$ and $\tilde{V}$ that drive the computations, while Lemmas~\ref{lem:4.2} and \ref{lem:4.3} ensure that the triple products appearing in the statements are unambiguous.
	
	\begin{lem}\label{lem:4.1}
		Let $H$ be a quasi-Hopf algebra with bijective antipode.
		Define elements $U=U^1\ot U^2$ and $\tilde{V}=\tilde{V}^1\ot \tilde{V}^2$ in $H\ot H$ by
		\begin{equation}\label{eq:4.1}
			U:=g^1S(q^2)\ot g^2S(q^1),
			\qquad
			\tilde{V}:=S(\tilde{p}^2)f^1\ot S(\tilde{p}^1)f^2.
		\end{equation}
		We use the notations
		\begin{equation*}
			U=U^1\ot U^2=u^1\ot u^2=\mathcal{U}^1\ot \mathcal{U}^2=\cdots,
			\qquad
			\tilde{V}=\tilde{V}^1\ot \tilde{V}^2=\tilde{v}^1\ot \tilde{v}^2=\tilde{\mathcal{V}}^1\ot \tilde{\mathcal{V}}^2=\cdots.
		\end{equation*}
		Here $q_R=q^1\ot q^2$ and $p_L=\tilde{p}^1\ot \tilde{p}^2$,
		$f=f^1\ot f^2$ is the Drinfel'd twist, and $f^{-1}=g^1\ot g^2$ is its inverse.
		Then for all $h\in H$,
		\begin{gather}\label{eq:4.2}
			U(1_H \ot S(h))=\Delta(S(h_1))U(h_2 \ot 1_H),\\
			\label{eq:4.3}
			(S(h) \ot 1_H)\tilde{V}=(1_H \ot h_1)\tilde{V}\Delta(S(h_2)),\\
			\label{eq:4.4}
			\Phi^{-1}\,(\Id_H \ot \Delta)(U)\,(1_H \ot U)
			=(\Delta \ot \Id_H)(\Delta(S(X^{1}))U)\,(X^{2} \ot X^{3} \ot 1_H),\\
			\label{eq:4.5}
			(\tilde{V} \ot 1_H)(\Delta \ot \Id_H)(\tilde{V})\Phi^{-1}
			=(1_H \ot X^1 \ot X^2)(\Id_H\ot \Delta)(\tilde{V}\Delta(S(X^3))).
		\end{gather}
	\end{lem}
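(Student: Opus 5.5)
The four identities split into two "intertwining" relations, \eqref{eq:4.2}--\eqref{eq:4.3}, and two "cocycle-type" relations, \eqref{eq:4.4}--\eqref{eq:4.5}. Within each pair the second identity is the mirror image of the first. The plan is to prove \eqref{eq:4.2} and \eqref{eq:4.4} directly from the definition $U=g^1S(q^2)\ot g^2S(q^1)$, using \eqref{eq:2.7}, \eqref{eq:2.10} and \eqref{eq:2.12}. Then \eqref{eq:4.3} and \eqref{eq:4.5} follow by the same computation with $\tilde{V}=S(\tilde p^2)f^1\ot S(\tilde p^1)f^2$, now using \eqref{eq:2.7}, \eqref{eq:2.11} and \eqref{eq:2.13}. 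One could also obtain them by passing to $H^{\op,\cop}$, where $q_R$ and $p_L$, and $f$ and $f^{-1}$, are interchanged, but the direct dual computation is cleaner to write.

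For \eqref{eq:4.2}, apply $S\ot S$ to \eqref{eq:2.10} composed with the flip. Since $S$ is an anti-algebra map, this gives
\[
(S\ot S)(\Delta^{\cop}(h_1))\,(S(q^2)\ot S(q^1))\,(1\ot h_2)=(S(q^2)\ot S(q^1))\,(1\ot S(h)).
\]
Now multiply on the left by $f^{-1}$ and use \eqref{eq:2.7} in the form $f^{-1}(S\ot S)(\Delta^{\cop}(h_1))=\Delta(S(h_1))f^{-1}$. This yields $U(1\ot S(h))=\Delta(S(h_1))\,U\,(h_2\ot 1)$ after rearranging the $1\ot h_2$ factor. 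Keeping track of whether $h_2$ sits in the first or the second leg requires care, so this bookkeeping must be checked. The proof of \eqref{eq:4.3} is identical, starting from \eqref{eq:2.11} and using $f$ on the right.

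For \eqref{eq:4.4}, expand $(\Id\ot\Delta)(U)(1\ot U)$. Then $(\Id\ot\Delta)(f^{-1})(1\ot f^{-1})$ appears, partly hidden behind $\Delta(S(q^1))$, and \eqref{eq:2.7} is used to move $\Delta S$ across $f^{-1}$. This converts $(\Id\ot\Delta)(g^1S(q^2)\ot g^2S(q^1))$ into $(1\ot f^{-1})$-type factors times $(S\ot S\ot S)$ applied to $q^1 Q^1_1\ot q^2Q^1_2\ot Q^2$, reversed. Next, \eqref{eq:2.8} rewrites $(\Id\ot\Delta)(f^{-1})(1\ot f^{-1})$ as $\Phi\,(\Delta\ot\Id)(f^{-1})(f^{-1}\ot1)$ conjugated by $(S\ot S\ot S)(\Phi^{321})$. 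The left factor $\Phi^{-1}$ in \eqref{eq:4.4} is included exactly to cancel this $\Phi$. What remains is $S^{\ot3}$ of a product of $q_R$'s and $\Phi^{\pm1}$, which is exactly what \eqref{eq:2.12} (with the flip and $S^{\ot3}$ applied) converts into $q^1X^1_1\ot S^{-1}(f^2X^3)q^2_1X^1_{(2,1)}\ot\cdots$. Applying $S$ to this expression, the factors $S^{-1}(f^2X^3)$ become $f^2X^3$, and the $f$ recombines with $(\Delta\ot\Id)(f^{-1})(f^{-1}\ot1)$ via \eqref{eq:2.7}. One then reads off $(\Delta\ot\Id)(\Delta(S(X^1))U)(X^2\ot X^3\ot1)$, with the $X^1_{(2,1)},X^1_{(2,2)}$ becoming $\Delta(S(X^1))$ under $S$ and \eqref{eq:2.7}.

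The main obstacle is this last step: matching the precise form of \eqref{eq:2.12} after applying $S^{\ot3}$ and the reversal, and making sure that the leftover $f$, $f^{-1}$ and $\Phi$ factors cancel against \eqref{eq:2.8} rather than producing an extra $\Phi^{321}_S$. My first attempt will be to work entirely in reversed-leg notation, writing $\tau_{13}S^{\ot3}$ of both sides of \eqref{eq:2.12}, before substituting into the expansion. If the factors do not close up, I would instead prove \eqref{eq:4.4} by applying \eqref{eq:4.2} to push $\Delta(S(X^1))$ through, reducing to the special case where the identity is checked against the generators of $q_R$. \eqref{eq:4.5} is handled dually with \eqref{eq:2.13}.
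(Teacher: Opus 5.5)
Your proposal is correct and follows the paper's proof. It uses the same ingredients: \eqref{eq:2.7} with \eqref{eq:2.10}/\eqref{eq:2.11} for \eqref{eq:4.2}/\eqref{eq:4.3}, and \eqref{eq:2.7}, \eqref{eq:2.8}, \eqref{eq:2.12}/\eqref{eq:2.13} for \eqref{eq:4.4}/\eqref{eq:4.5}. The only variation is that you apply \eqref{eq:2.8} before \eqref{eq:2.12}, whereas the paper applies it after.

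That reordering is fine. The identity $\Phi^{-1}(\Id_H\ot\Delta)(f^{-1})(1_H\ot f^{-1})=(\Delta\ot\Id_H)(f^{-1})(f^{-1}\ot 1_H)\,(S(x^3)\ot S(x^2)\ot S(x^1))$ gives a one-sided factor, not a conjugation. It places the left-hand side of \eqref{eq:2.12} (and dually of \eqref{eq:2.13}) in front of you verbatim, so the $f$, $f^{-1}$ and $\Phi$ factors cancel with no leftover $(S\ot S\ot S)(X^3\ot X^2\ot X^1)$.

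One bookkeeping fix: flipping \eqref{eq:2.10} and applying $S\ot S$ produces the factor $(h_2\ot 1_H)$ on the right, not $(1_H\ot h_2)$. Multiplying on the left by $f^{-1}$ then gives \eqref{eq:4.2} directly, with no rearranging.
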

	
	\begin{proof}
		The identities \eqref{eq:4.2} and \eqref{eq:4.4} are proved in \cite{BCPVO}*{Lemmas~7.32 and 7.45}.
		For the convenience of the reader, we include the computations.
		
		First we prove \eqref{eq:4.2} and \eqref{eq:4.3}.
		\begin{align*}
			\Delta(S(h_1))\,U\,[h_2 \ot 1_H]\;
			&\eqbyref{eq:4.1}
			S(h_1)_1 g^1 S(q^2) h_2 \ot S(h_1)_2 g^2 S(q^1)\\
			&\eqbyref{eq:2.7}
			g^1 S(q^2 h_{(1,2)}) h_2 \ot g^2 S(q^1 h_{(1,1)})\\
			&\eqbyref{eq:2.10}
			g^1 S(q^2) \ot g^2 S(h q^1)
			\eqbyref{eq:4.1}
			U\,[1_H \ot S(h)],
		\end{align*}
		which gives \eqref{eq:4.2}. Similarly,
		\begin{align*}
			[1_H \ot h_1]\tilde{V}\Delta(S(h_2))\;
			&\eqbyref{eq:4.1}
			S(\tilde{p}^2) f^1 S(h_2)_1 \ot h_1S(\tilde{p}^1) f^2 S(h_2)_2\\
			&\eqbyref{eq:2.7}
			S(h_{(2,2)}\tilde{p}^2) f^1 \ot h_1S(h_{(2,1)}\tilde{p}^1) f^2\\
			&\eqbyref{eq:2.11}
			S(\tilde{p}^2h) f^1 \ot S(\tilde{p}^1) f^2
			\eqbyref{eq:4.1}
			[S(h) \ot 1_H]\tilde{V},
		\end{align*}
		which is \eqref{eq:4.3}.
		
		Next we prove \eqref{eq:4.4} and \eqref{eq:4.5}.
		\begin{align*}
			\Phi^{-1}(\Id_H \ot \Delta)(U)(1_H \ot U)\;
			&\eqbyref{eq:4.1}
			x^1 g^1 S(q^2)
			\ot x^2 g_1^2 S(q^1)_1G^1S(Q^2)
			\ot x^3 g_2^2 S(q^1)_2G^2S(Q^1)\\
			&\eqbyref{eq:2.7}
			x^1 g^1 S(q^2)
			\ot x^2 g_1^2 G^1 S(Q^2 q^1_2)
			\ot x^3 g_2^2 G^2 S(Q^1 q^1_1)\\
			&\eqbyref{eq:2.12}
			x^1 g^1 S(q_2^2 X^1_{(2,2)} Y^3)f^1 X^2
			\ot x^2 g_1^2 G^1 S(q^2_1 X^1_{(2,1)} Y^2)f^2 X^3 \\
			&\qquad\ot x^3 g_2^2 G^2 S(q^1 X_1^1 Y^1)\\
			&\eqbyref{eq:2.7}
			x^1 g^1 S(Y^3)f^1S(q^2 X^1_2)_1 X^2
			\ot x^2 g_1^2 G^1 S(Y^2)f^2S(q^2 X^1_2)_2 X^3 \\
			&\qquad\ot x^3 g_2^2 G^2 S(Y^1)S(q^1 X_1^1)\\
			&\eqbyref{eq:2.8}
			g^1_1S(q^2 X^1_2)_1 X^2
			\ot g^1_2 S(q^2 X^1_2)_2 X^3 \ot
			g^2 S(q^1 X_1^1)\\
			&= (\Delta(g^1S(X^1_2)S(q^2))\ot g^2S(X^1_1)S(q^1))(X^2\ot X^3\ot 1_H)\\
			&\eqbyref{eq:2.7}
			(\Delta(S(X^1)_1g^1S(q^2))\ot S(X^1)_2g^2S(q^1))(X^2\ot X^3\ot 1_H)\\
			&\eqbyref{eq:4.1}
			(\Delta \ot \Id_H)(\Delta(S(X^1))U)(X^2 \ot X^3 \ot 1_H),
		\end{align*}
		which gives \eqref{eq:4.4}. Finally,
		\begin{align*}
			(\tilde{V} \ot 1_H)(\Delta \ot \Id_H)(\tilde{V})\Phi^{-1}\;
			&\eqbyref{eq:4.1}
			S(\tilde{p}^2)f^1S(\tilde{P}^2)_1F^1_1x^1
			\ot S(\tilde{p}^1)f^2S(\tilde{P}^2)_2F^1_2x^2
			\ot S(\tilde{P}^1)F^2x^3\\
			&\eqbyref{eq:2.7}
			S(\tilde{P}^2_2\tilde{p}^2)f^1F^1_1x^1
			\ot S(\tilde{P}^2_1\tilde{p}^1)f^2F^1_2x^2
			\ot S(\tilde{P}^1)F^2x^3\\
			&\eqbyref{eq:2.13}
			S(Y^3X^3_2\tilde{p}^2)f^1F^1_1x^1
			\ot X^1g^1S(Y^2X^3_{(1,2)}\tilde{p}^1_2)f^2F^1_2x^2\\
			&\qquad\ot X^2g^2S(Y^1X^3_{(1,1)}\tilde{p}^1_1)F^2x^3\\
			&\eqbyref{eq:2.7}
			S(X^3_2\tilde{p}^2)S(Y^3)f^1F^1_1x^1
			\ot X^1S(X^3_1\tilde{p}^1)_1g^1S(Y^2)f^2F^1_2x^2\\
			&\qquad\ot X^2S(X^3_1\tilde{p}^1)_2g^2S(Y^1)F^2x^3\\
			&\eqbyref{eq:2.8}
			S(X^3_2\tilde{p}^2)f^1
			\ot X^1S(X^3_1\tilde{p}^1)_1f^2_1
			\ot X^2S(X^3_1\tilde{p}^1)_2f^2_2\\
			&=(1_H\ot X^1\ot X^2)\Bigl(S(X^3_2\tilde{p}^2)f^1\ot \Delta(S(X^3_1\tilde{p}^1)f^2)\Bigr)\\
			&\eqbyref{eq:2.7}
			(1_H\ot X^1\ot X^2)\Bigl(S(\tilde{p}^2)f^1S(X^3)_1\ot \Delta(S(\tilde{p}^1)f^2S(X^3)_2)\Bigr)\\
			&\eqbyref{eq:4.1}
			(1_H \ot X^1 \ot X^2)(\Id_H\ot \Delta)(\tilde{V}\Delta(S(X^3))),
		\end{align*}
		which is \eqref{eq:4.5}.
	\end{proof}
	
	\begin{lem}\label{lem:4.2}
		Let $H$ be a quasi-Hopf algebra and let $\xi \# a,\nu \# b,\zeta \# c\in \HD_1(H^\ast)$.
		If one of $\xi,\nu,\zeta$ equals the counit $\varepsilon$, then
		\begin{equation*}
			\left( (\xi\#a)(\nu\#b)\right) (\zeta\#c)=(\xi\#a)\left( (\nu\#b)(\zeta\#c)\right) .
		\end{equation*}
	\end{lem}
	\begin{proof}
		This follows immediately from the definition of the product in $\HD_1(H^\ast)$
		together with \eqref{eq:2.1}.
	\end{proof}
	
	\begin{lem}\label{lem:4.3}
		Let $H$ be a quasi-Hopf algebra and let $a \# \xi,b \# \nu,c \# \zeta\in \HD_1(H)$.
		If one of $\xi,\nu,\zeta$ equals the counit $\varepsilon$, then
		\begin{equation*}
			\left( (a\#\xi)(b\#\nu)\right) (c\#\zeta)=(a\#\xi)\left( (b\#\nu)(c\#\zeta)\right) .
		\end{equation*}
	\end{lem}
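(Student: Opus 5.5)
The plan is to expand both sides of the claimed identity with the multiplication rule \eqref{eq:HD1H-mult} in full generality, and then check the three cases $\xi=\varepsilon$, $\nu=\varepsilon$, $\zeta=\varepsilon$ separately.

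Two elementary facts will be used throughout. First, for $\varphi,\psi\in H^\ast$ and $h\in H$,
\[
(\varphi\ast\psi)\leftharpoonup h=(\varphi\leftharpoonup h_1)\ast(\psi\leftharpoonup h_2),
\]
since $(\varphi\ast\psi)(ha)=\varphi(h_1a_1)\psi(h_2a_2)$ and $\Delta$ is an algebra map. Second, the counit properties of $\Phi^{\pm1}$ from \eqref{eq:2.4} and the remark following it hold; evaluating $\varepsilon$ on any one leg of $\Phi^{-1}$ gives $1_H\ot 1_H$.

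\textbf{Expanding both sides.} With independent copies $x,y,z,w$ of $\Phi^{-1}$, the left-hand side is
\[
ab_1x^1c_1y^1\#\bigl((\xi\leftharpoonup b_2x^2c_{(2,1)}y^2_1)\ast(\nu\leftharpoonup x^3c_{(2,2)}y^2_2)\bigr)\ast(\zeta\leftharpoonup y^3).
\]
The right-hand side is
\[
ab_1c_{(1,1)}z^1_1w^1\#(\xi\leftharpoonup b_2c_{(1,2)}z^1_2w^2)\ast\bigl((\nu\leftharpoonup c_2z^2w^3_1)\ast(\zeta\leftharpoonup z^3w^3_2)\bigr).
\]
Convolution on $H^\ast$ is not associative in general, which is why the identity fails without a counit. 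Once one factor is $\varepsilon$, however, only a two-fold product survives and no reassociation is needed.

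\textbf{Case $\xi=\varepsilon$.*} On the left, $\varepsilon(b_2x^2c_{(2,1)}y^2_1)$ appears. Using \eqref{eq:2.2} and $\varepsilon$ on the middle leg of $\Phi^{-1}$, the $x$-terms disappear. The left side becomes $abc_1y^1\#(\nu\leftharpoonup c_2y^2)\ast(\zeta\leftharpoonup y^3)$. On the right, $\varepsilon(b_2c_{(1,2)}z^1_2w^2)$ kills $w$ and collapses $b_1c_{(1,1)}z^1_1\ot c_{(1,2)}z^1_2$ back to $b\ot c z^1$ via the counit. This gives the same expression.

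\textbf{Case $\nu=\varepsilon$.} On the left, the factor $\varepsilon(x^3c_{(2,2)}y^2_2)$ removes $x$ and yields $ab_1c_1y^1\#(\xi\leftharpoonup b_2c_2y^2)\ast(\zeta\leftharpoonup y^3)$. On the right, the factor $\varepsilon(c_2z^2w^3_1)$ removes $z$ (counit on its middle leg) and collapses the $c$-coproduct. The result is the same expression with $w$ in place of $y$.

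\textbf{Case $\zeta=\varepsilon$.} This is the only case where \eqref{eq:2.1} is genuinely needed. After removing $y$ on the left and $z$ on the right, it reduces to the equality
\[
x^1c_1\ot x^2c_{(2,1)}\ot x^3c_{(2,2)}=c_{(1,1)}w^1\ot c_{(1,2)}w^2\ot c_2w^3.
\]
This is exactly \eqref{eq:2.1} rewritten as $\Phi^{-1}(\Id_H\ot\Delta)(\Delta(c))=(\Delta\ot\Id_H)(\Delta(c))\Phi^{-1}$.

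The only delicate point is the bookkeeping of which leg of $\Phi^{-1}$ is killed by the counit in each case. No step is a real obstacle; the argument mirrors that of Lemma~\ref{lem:4.2}.
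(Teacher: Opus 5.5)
Your proof is correct and takes the same approach as the paper, which only says that the lemma follows from the definition of the product in $\HD_1(H)$ together with \eqref{eq:2.1}. Your case-by-case expansion supplies exactly the computation the paper leaves implicit: the counit properties close the cases $\xi=\varepsilon$ and $\nu=\varepsilon$, and the case $\zeta=\varepsilon$ uses the identity $\Phi^{-1}(\Id_H\ot\Delta)(\Delta(c))=(\Delta\ot\Id_H)(\Delta(c))\Phi^{-1}$.
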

	\begin{proof}
		This follows immediately from the definition of the product in $\HD_1(H)$
		together with \eqref{eq:2.1}.
	\end{proof}
	
	\begin{thm}\label{thm:4.4}
		Let $H$ be a finite-dimensional quasi-Hopf algebra over $k$.
		Let $\{\bm{e}_i\}_{i\in I}$ be a basis of $H$ and $\{\bm{e}^i\}_{i\in I}$ its dual basis.
		Let
		\begin{equation*}
			W=\sum_{i \in I} \varepsilon \# \bm{e}_i \ot \bm{e}^i \# 1_H,
			\qquad
			\tilde{W}=\sum_{i \in I} \varepsilon \# S(\bm{e}_i U^1) \ot \bm{e}^i \# U^2
			\in {\HD_1(H^\ast)}^{\ot 2},
		\end{equation*}
		and
		\begin{equation*}
			\boldsymbol{\Phi}^{-1}=\varepsilon \# x^1 \ot \varepsilon \# x^2 \ot \varepsilon \# x^3,
			\qquad
			\boldsymbol{\Phi}^{321}_{S}=\varepsilon \# S(X^3) \ot \varepsilon \# S(X^2) \ot \varepsilon \# S(X^1)
			\in {\HD_1(H^\ast)}^{\ot 3}.
		\end{equation*}
		Then
		\begin{align}\label{eq:4.6}
			W_{12}W_{13}W_{23}&=W_{23}W_{12}\boldsymbol{\Phi}^{-1},\\
			\label{eq:4.7}
			\tilde{W}_{23}\tilde{W}_{13}\tilde{W}_{12}&=\boldsymbol{\Phi}^{321}_{S}\tilde{W}_{12}\tilde{W}_{23}.
		\end{align}
	\end{thm}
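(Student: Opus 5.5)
Both identities will be proved by expanding each side in the triple tensor product ${\HD_1(H^\ast)}^{\ot 3}$ with the multiplication \eqref{eq:HD1Hstar-mult}. We then pair the result against an arbitrary $a\ot b\ot c\in H^{\ot 3}$ in the $H^\ast$-slots, or equivalently use $\sum_i \bm{e}_i\,\langle \bm{e}^i,h\rangle = h$ to eliminate dual-basis sums. Lemma~\ref{lem:4.2} makes the triple products well defined: in each tensor slot at most one factor has a nontrivial $H^\ast$-part, so each slot's product involves at most one non-$\varepsilon$ first component. We will therefore freely drop parentheses slotwise.

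\textbf{Proof of \eqref{eq:4.6}.} We compute slot by slot.
\begin{itemize}
\item In $W_{12}W_{13}W_{23}$, slot 1 is $(\varepsilon\#\bm{e}_i)(\varepsilon\#\bm{e}_j)=\varepsilon\#\bm{e}_i\bm{e}_j$.
\item Slot 2 is $(\bm{e}^i\#1)(\varepsilon\#\bm{e}_k)=\bm{e}^i\#\bm{e}_k$.
\item Slot 3 is $(\bm{e}^j\#1)(\bm{e}^k\#1)=(x^1\rightharpoonup\bm{e}^j)\ast(x^2\rightharpoonup\bm{e}^k)\#x^3$.
\end{itemize}
Rewriting with the dual basis, the slot-3 dual components recombine via the coproduct. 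The left side thus becomes $\sum \varepsilon\#\bm{e}_l{}_{1}x^1\ot \bm{e}^l?\ldots$, which we organise as
\[
\sum_i \varepsilon\#(\bm{e}_i)_1x^1\ot (\bm{e}_i)_2 x^2\text{-type term}\ot \bm{e}^i\#x^3 .
\]
On the right, slot 1 of $W_{23}W_{12}\boldsymbol\Phi^{-1}$ is $\varepsilon\#\bm{e}_j x^1$. Slot 2 is $(\varepsilon\#\bm{e}_i)(\bm{e}^j\#1)(\varepsilon\#x^2)=((\bm{e}_i)_1\rightharpoonup\bm{e}^j)\#(\bm{e}_i)_2x^2$, and slot 3 is $\bm{e}^i\#x^3$. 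Pairing both sides with test vectors, and using $\sum_j \bm{e}_j\langle h\rightharpoonup \bm{e}^j,a\rangle = ah$ together with the convolution identity $\langle\xi\ast\nu,c\rangle=\xi(c_1)\nu(c_2)$, both sides reduce to $\varepsilon\# c_1\ldots$-expressions with the same $x$'s. This is essentially the Hopf computation, with $\Phi^{-1}$ carried along, so it should be straightforward.

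\textbf{Proof of \eqref{eq:4.7}.} This is the substantial part. We expand $\tilde W_{23}\tilde W_{13}\tilde W_{12}$ similarly, now with the $U$-legs inserted.
\begin{itemize}
\item Slot 1 receives $\varepsilon\#S(\bm{e}_jU^1)S(\bm{e}_k u^1)$, i.e.\ $\varepsilon\#S(\bm{e}_k u^1\bm{e}_jU^1)$.
\item Slot 2 receives $(\varepsilon\#S(\bm{e}_i\mathcal U^1))(\bm{e}^k\#u^2)$, which produces $\Delta(S(\bm{e}_i\mathcal U^1))$ acting on $\bm{e}^k$.
\item Slot 3 receives $(\bm{e}^i\#\mathcal U^2)(\bm{e}^j\#U^2)$, which contains $\Phi^{-1}$ and $\Delta(\mathcal U^2)$.
\end{itemize}
After pairing and collapsing dual-basis sums, these $\Delta$'s and $\Phi^{-1}$ assemble exactly into $\Phi^{-1}(\Id\ot\Delta)(U)(1\ot U)$. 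The antipode is anti-multiplicative, so this appears reflected into the opposite ordering, which explains the $321$ superscript. We then apply \eqref{eq:4.4} to rewrite this element as $(\Delta\ot\Id)(\Delta(S(X^1))U)(X^2\ot X^3\ot1)$, and use \eqref{eq:4.2} to move the $\Delta(S(\cdot))$ factors past $U$ and onto the dual-basis elements. The resulting expression should match $\boldsymbol\Phi^{321}_S\tilde W_{12}\tilde W_{23}$. On that side, the factors $\varepsilon\#S(X^3),\varepsilon\#S(X^2),\varepsilon\#S(X^1)$ multiply from the left, and the slot-2 product $(\bm{e}^i\#U^2)(\varepsilon\#S(\bm{e}_j u^1))$ yields $U^2S(\bm{e}_ju^1)$. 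This structure mirrors the derivation of \eqref{eq:4.4}.

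The main obstacle is the bookkeeping in \eqref{eq:4.7}. We must identify which tensor factor of $H^{\otimes3}$ corresponds to each slot after the dual-basis contractions, since $S$ reverses order and contractions transport elements between slots through $\sum_i \bm{e}_i\ot \bm{e}^i$ (e.g.\ $\sum_i \bm{e}_ih\ot\bm{e}^i=\sum_i\bm{e}_i\ot h\rightharpoonup\bm{e}^i$). We also need the right-action $\leftharpoonup$ together with left multiplication in $\HD_1(H^\ast)$, in order to check that the $S(X^j)$ prefactors land correctly. First, I would pair both sides against $a\ot b\ot c$ in the relevant dual slots and reduce everything to one identity in $H^{\ot3}$. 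I would then verify that this identity is precisely \eqref{eq:4.4} conjugated by \eqref{eq:4.2}.
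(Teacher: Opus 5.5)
Your plan is the paper's proof. You expand slotwise and collapse the dual-basis sums; for \eqref{eq:4.7} this produces exactly $\Phi^{-1}(\Id_H\ot\Delta)(U)(1_H\ot U)$ spread over the three slots. You then apply \eqref{eq:4.4}, and finally \eqref{eq:4.2} to $h=\bm{e}_lS(X^1)_1U^1$, where $U$ is the copy that \eqref{eq:4.4} produces.

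Two small slips to fix in the write-up. First, slot~3 of $W_{12}W_{13}W_{23}$ and of $\tilde{W}_{23}\tilde{W}_{13}\tilde{W}_{12}$ has two non-$\varepsilon$ dual components, not at most one; this is harmless, since Lemma~\ref{lem:4.2} needs only one factor with $\varepsilon$. Second, the collapsed left side of \eqref{eq:4.6} should be $\sum_{i,l}\varepsilon\#\bm{e}_i(\bm{e}_l)_1x^1\ot\bm{e}^i\#(\bm{e}_l)_2x^2\ot\bm{e}^l\#x^3$, not a single-sum expression.
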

	
	\begin{proof}
		Although the product on $\HD_1(H^\ast)$ is not associative, Lemma~\ref{lem:4.2}
		implies that $(W_{12}W_{13})W_{23}=W_{12}(W_{13}W_{23})$.
		Hence the expression $W_{12}W_{13}W_{23}$ is unambiguous, and the same convention applies to the other products.
		
		We first prove \eqref{eq:4.6}.
		\begin{align*}
			W_{12}W_{13}W_{23}
			&=
			\sum_{i,j,k \in I} \left( (\varepsilon \# \bm{e}_i \ot \bm{e}^i \# 1_H \ot \varepsilon \# 1_H)
			(\varepsilon \# \bm{e}_j \ot \varepsilon \# 1_H  \ot\bm{e}^j \# 1_H )\right)
			(\varepsilon \# 1_H\ot \varepsilon \# \bm{e}_k \ot \bm{e}^k \# 1_H )\\
			&=
			\sum_{i,j,k \in I} (\varepsilon \# \bm{e}_i\bm{e}_j \ot \bm{e}^i \# 1_H \ot \bm{e}^j \# 1_H)
			(\varepsilon \# 1_H\ot \varepsilon \# \bm{e}_k \ot \bm{e}^k \# 1_H )\\
			&=
			\sum_{i,j,k \in I} \varepsilon \# \bm{e}_i\bm{e}_j \ot \bm{e}^i \# \bm{e}_k
			\ot (x^1 \rightharpoonup \bm{e}^j )\ast(x^2 \rightharpoonup \bm{e}^k) \# x^3\\
			&=
			\sum_{i,j,k,l \in I} \varepsilon \# \bm{e}_i\bm{e}_j \ot \bm{e}^i \# \bm{e}_k
			\ot \bm{e}^j((\bm{e}_l)_1x^1)\bm{e}^k((\bm{e}_l)_2x^2)\bm{e}^l \# x^3\\
			&=
			\sum_{i,l \in I} \varepsilon \# \bm{e}_i(\bm{e}_l)_1x^1 \ot \bm{e}^i \# (\bm{e}_l)_2x^2 \ot \bm{e}^l \# x^3.
		\end{align*}
		Similarly,
		\begin{align*}
			W_{23}W_{12}\boldsymbol{\Phi}^{-1}
			&=
			\sum_{i,j \in I} \left( (\varepsilon \# 1_H\ot \varepsilon \# \bm{e}_i \ot \bm{e}^i \# 1_H )
			(\varepsilon \# \bm{e}_j \ot \bm{e}^j \# 1_H \ot \varepsilon \# 1_H)\right)
			(\varepsilon \# x^1 \ot \varepsilon \# x^2 \ot \varepsilon \# x^3)\\
			&=
			\sum_{i,j \in I} (\varepsilon \# \bm{e}_j\ot ((\bm{e}_i)_1\rightharpoonup \bm{e}^j) \# (\bm{e}_i)_2
			\ot \bm{e}^i \# 1_H )
			(\varepsilon \# x^1 \ot \varepsilon \# x^2 \ot \varepsilon \# x^3)\\
			&=
			\sum_{i,j \in I} \varepsilon \# \bm{e}_jx^1\ot ((\bm{e}_i)_1\rightharpoonup \bm{e}^j) \# (\bm{e}_i)_2x^2 \ot \bm{e}^i \# x^3\\
			&=
			\sum_{i,j \in I} \varepsilon \# \bm{e}_j(\bm{e}_i)_1x^1\ot \bm{e}^j \# (\bm{e}_i)_2x^2 \ot \bm{e}^i \# x^3.
		\end{align*}
		Thus \eqref{eq:4.6} holds.
		
		We next prove \eqref{eq:4.7}.
		\begin{align*}
			\boldsymbol{\Phi}^{321}_{S}\tilde{W}_{12}\tilde{W}_{23}
			&=
			\sum_{i,j \in I} (\varepsilon \# S(X^3) \ot \varepsilon \# S(X^2) \ot \varepsilon \# S(X^1)) \\
			&\qquad\times\Bigl(
			( \varepsilon \# S(\bm{e}_i U^1) \ot \bm{e}^i \# U^2 \ot \varepsilon \# 1_H)
			(\varepsilon \# 1_H \ot \varepsilon \# S(\bm{e}_j u^1) \ot \bm{e}^j \# u^2)
			\Bigr)\\
			&=
			\sum_{i,j \in I} (\varepsilon \# S(X^3) \ot \varepsilon \# S(X^2) \ot \varepsilon \# S(X^1))
			( \varepsilon \# S(\bm{e}_i U^1) \ot \bm{e}^i \# U^2S(\bm{e}_j u^1) \ot \bm{e}^j \# u^2)\\
			&=
			\sum_{i,j \in I} \varepsilon \# S(\bm{e}_i U^1X^3)
			\ot (S(X^2)_1\rightharpoonup \bm{e}^i) \# S(X^2)_2U^2S(\bm{e}_j u^1) \ot (S(X^1)_1\rightharpoonup \bm{e}^j) \# S(X^1)_2u^2\\
			&=
			\sum_{i,j \in I} \varepsilon \# S(\bm{e}_i S(X^2)_1 U^1X^3)
			\ot \bm{e}^i \# S(X^2)_2U^2S(\bm{e}_j S(X^1)_1 u^1) \ot \bm{e}^j \# S(X^1)_2u^2.
		\end{align*}
		
		On the other hand,
		\begin{align*}
			\tilde{W}_{23}\tilde{W}_{13}\tilde{W}_{12}
			&=
			\sum_{i,j,k \in I} \Bigl(
			(\varepsilon \# 1_H \ot \varepsilon \# S(\bm{e}_i U^1) \ot \bm{e}^i \# U^2)
			(\varepsilon \# S(\bm{e}_j u^1) \ot \varepsilon \# 1_H  \ot\bm{e}^j \# u^2 )
			\Bigr) \\
			&\qquad\times ( \varepsilon \# S(\bm{e}_k \mathcal{U}^1)\ot \bm{e}^k \# \mathcal{U}^2 \ot \varepsilon \# 1_H)\\
			&=
			\sum_{i,j,k \in I} \Bigl(
			\varepsilon \# S(\bm{e}_j u^1) \ot \varepsilon \# S(\bm{e}_i U^1) \ot (x^1 \rightharpoonup \bm{e}^i )\ast(x^2U^2_1 \rightharpoonup \bm{e}^j) \# x^3U^2_2u^2
			\Bigr) \\
			&\qquad\times ( \varepsilon \# S(\bm{e}_k \mathcal{U}^1)\ot \bm{e}^k \# \mathcal{U}^2 \ot \varepsilon \# 1_H)\\
			&=
			\sum_{i,j,k \in I} \varepsilon \# S(\bm{e}_k \mathcal{U}^1\bm{e}_j u^1)
			\ot (S(\bm{e}_i U^1)_1\rightharpoonup \bm{e}^k) \# S(\bm{e}_i U^1)_2\mathcal{U}^2 \\
			&\qquad\ot (x^1 \rightharpoonup \bm{e}^i )\ast(x^2U^2_1 \rightharpoonup \bm{e}^j) \# x^3U^2_2u^2\\
			&=
			\sum_{i,j,k,l \in I} \varepsilon \# S(\bm{e}_k S(\bm{e}_i U^1)_1 \mathcal{U}^1\bm{e}_j u^1)
			\ot \bm{e}^k \# S(\bm{e}_i U^1)_2\mathcal{U}^2 \\
			&\qquad\ot \bm{e}^i((\bm{e}_l)_1x^1) \bm{e}^j((\bm{e}_l)_2x^2U^2_1)\bm{e}^l \# x^3U^2_2u^2 \\
			&=
			\sum_{k,l \in I} \varepsilon \# S(\bm{e}_k S((\bm{e}_l)_1x^1 U^1)_1 \mathcal{U}^1(\bm{e}_l)_2x^2U^2_1 u^1)
			\ot \bm{e}^k \# S((\bm{e}_l)_1x^1 U^1)_2\mathcal{U}^2 \ot \bm{e}^l \# x^3U^2_2u^2 \\
			&\eqbyref{eq:4.4}
			\sum_{k,l \in I} \varepsilon \# S(\bm{e}_k S((\bm{e}_l)_1S(X^1)_{(1,1)}U^1_1X^2)_1 \mathcal{U}^1(\bm{e}_l)_2S(X^1)_{(1,2)}U^1_2X^3) \\
			&\qquad\ot \bm{e}^k \# S((\bm{e}_l)_1S(X^1)_{(1,1)}U^1_1X^2)_2\mathcal{U}^2 \ot \bm{e}^l \# S(X^1)_2U^2 \\
			&=
			\sum_{k,l \in I} \varepsilon \# S(\bm{e}_k S(X^2)_1S((\bm{e}_lS(X^1)_1U^1)_1)_1 \mathcal{U}^1(\bm{e}_lS(X^1)_1U^1)_2X^3) \\
			&\qquad\ot \bm{e}^k \# S(X^2)_2S((\bm{e}_lS(X^1)_1U^1)_1)_2\mathcal{U}^2 \ot \bm{e}^l \# S(X^1)_2U^2 \\
			&\eqbyref{eq:4.2}
			\sum_{k,l \in I} \varepsilon \# S(\bm{e}_k S(X^2)_1 \mathcal{U}^1X^3)
			\ot \bm{e}^k \# S(X^2)_2\mathcal{U}^2S(\bm{e}_lS(X^1)_1U^1) \ot \bm{e}^l \# S(X^1)_2U^2 .
		\end{align*}
		
		Thus \eqref{eq:4.7} holds.
	\end{proof}
	
	\begin{thm}\label{thm:4.5}
		Under the same assumptions as in Theorem~\ref{thm:4.4}, let
		\begin{equation*}
			\bar{W}=\sum_{i \in I} \bm{e}_i \# \varepsilon \ot 1_H \# \bm{e}^i,
			\qquad
			\hat{W}=\sum_{i \in I} S(\tilde{V}^2 \bm{e}_i)\# \varepsilon \ot \tilde{V}^1 \# \bm{e}^i
			\in {\HD_1(H)}^{\ot 2},
		\end{equation*}
		and
		\begin{equation*}
			(\overline{\boldsymbol{\Phi}}^{-1})^{321}=x^3 \# \varepsilon \ot x^2 \# \varepsilon \ot x^1 \# \varepsilon,
			\qquad
			\overline{\boldsymbol{\Phi}}_S=S(X^1) \# \varepsilon \ot S(X^2) \# \varepsilon \ot S(X^3) \# \varepsilon
			\in {\HD_1(H)}^{\ot 3}.
		\end{equation*}
		Then
		\begin{align}\label{eq:4.8}
			\bar{W}_{23}\bar{W}_{13}\bar{W}_{12}&=(\overline{\boldsymbol{\Phi}}^{-1})^{321}\bar{W}_{12}\bar{W}_{23},\\
			\label{eq:4.9}
			\hat{W}_{12}\hat{W}_{13}\hat{W}_{23}&=\hat{W}_{23}\hat{W}_{12}\overline{\boldsymbol{\Phi}}_S.
		\end{align}
	\end{thm}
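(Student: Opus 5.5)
The proof will mirror that of Theorem~\ref{thm:4.4}, with the roles of left and right interchanged. By Lemma~\ref{lem:4.3}, every triple product in \eqref{eq:4.8} and \eqref{eq:4.9} is unambiguous. In each factor $\bar{W}_{ij}$, $\hat{W}_{ij}$, $(\overline{\boldsymbol{\Phi}}^{-1})^{321}$, $\overline{\boldsymbol{\Phi}}_S$, at least one tensor leg of each pair being multiplied has dual part $\varepsilon$. So whenever a nontrivial bracketing occurs, one of the three elements involved is of the form $a\#\varepsilon$. Throughout, we will use the special cases
\[
(a\#\varepsilon)(b\#\nu)=ab\#\nu,\qquad (a\#\xi)(b\#\varepsilon)=ab_1\#(\xi\leftharpoonup b_2),
\]
together with the general product \eqref{eq:HD1H-mult} when two nontrivial duals meet in the same leg. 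The dual-basis absorption rules are $\sum_i \bm{e}_i\ot (\xi\leftharpoonup h)(\bm{e}_i)\ldots$, that is, $\sum_i \bm{e}_i\ot (\bm{e}^i\leftharpoonup h)=\sum_i h\bm{e}_i\ot\bm{e}^i$, and $\sum_i \bm{e}_i\ot(\bm{e}^i\ast\bm{e}^j)\cdots=\sum_i (\bm{e}_i)_1\ot(\bm{e}_i)_2\cdots$. Each such absorption collapses a sum over two basis indices to a sum over one.

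For \eqref{eq:4.8}, compute both sides down to a normal form in which each side is a single sum $\sum_{i,l}$ of the shape (first leg in $H\#\varepsilon$) $\ot\, (\cdots\#\bm{e}^i)\ot(\cdots\#\bm{e}^l)$.

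Left-hand side. Computing $\bar{W}_{23}\bar{W}_{13}$ in the third leg produces $(\bm{e}^i\leftharpoonup \ldots)\ast(\bm{e}^j\leftharpoonup x^3)$ with an $x^1,x^2$ appearing in the $H$-parts. Multiplying by $\bar{W}_{12}$ then moves an $\bm{e}_k$ past $\bm{e}^j$, giving a $\leftharpoonup$ action.

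Right-hand side. The factor $(\overline{\boldsymbol{\Phi}}^{-1})^{321}$ multiplies from the left on the $H$-parts, so it is harmless.

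Comparison. After absorbing dual bases, the two sides should agree by a direct identity involving $\Phi^{-1}$ and the coproduct, i.e.\ \eqref{eq:2.1} only. This is the exact analogue of \eqref{eq:4.6}.

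For \eqref{eq:4.9}, the plan is to repeat the second half of the proof of Theorem~\ref{thm:4.4} with $\tilde{V}$ in place of $U$.

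Right-hand side. Expand $\hat{W}_{23}\hat{W}_{12}\overline{\boldsymbol{\Phi}}_S$. The right multiplication by $S(X^i)\#\varepsilon$ acts through $b_1$ on the $H$-part and through $\leftharpoonup b_2$ on the dual part. After absorption, $S(X^2)_2$ and $S(X^3)$ move into the arguments $S(\tilde{V}^2\bm{e}_i)$, giving a normal form
\[
\sum_{i,l} S(\ldots)\#\varepsilon\ot \ldots\#\bm{e}^i\ot \ldots\#\bm{e}^l .
\]

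Left-hand side. Expand $\hat{W}_{12}\hat{W}_{13}\hat{W}_{23}$. The product in the third leg of $\hat{W}_{13}\hat{W}_{23}$ produces a $\Phi^{-1}$ together with $\tilde{V}^1_1,\tilde{V}^1_2$ and a convolution $\bm{e}^j\ast\bm{e}^k$. Collapsing the convolution gives a coproduct of a single basis element. We then regroup the resulting expression into the form
\[
(\tilde{V}\ot 1_H)(\Delta\ot\Id_H)(\tilde{V})\Phi^{-1}
\]
and apply \eqref{eq:4.5}, which replaces it by $(1_H\ot X^1\ot X^2)(\Id_H\ot\Delta)(\tilde{V}\Delta(S(X^3)))$. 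Finally, \eqref{eq:4.3} (the commutation $(S(h)\ot1)\tilde{V}=(1\ot h_1)\tilde{V}\Delta(S(h_2))$) absorbs the leftover $S$ of a coproduct of a basis element back into a fresh $\tilde{V}$, playing the role that \eqref{eq:4.2} played for $U$.

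The main obstacle is this last bookkeeping step. Since $\HD_1(H)$ is a left-module algebra, the factors $\tilde{V}$ appear on the opposite side from $U$ in Theorem~\ref{thm:4.4}, so the regrouping must be done carefully so that the expression matches \eqref{eq:4.5} exactly. We also need to check that the $\Phi^{-1}$ produced by the product in $\HD_1(H)$ lands in the position $(\cdots)\Phi^{-1}$ rather than on the left. If it does not match directly, the first thing to try is conjugating by \eqref{eq:2.1}, moving $\Phi^{-1}$ through $(\Delta\ot\Id)\Delta$ of a basis element before invoking \eqref{eq:4.5}.
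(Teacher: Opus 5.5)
Your plan is correct and is essentially the paper's proof. For \eqref{eq:4.8}, direct expansion and dual-basis absorption bring both sides to the same normal form; in fact they coincide verbatim, so \eqref{eq:2.1} enters only through Lemma~\ref{lem:4.3}. For \eqref{eq:4.9}, the left side collapses to an expression containing exactly $\tilde v^1\tilde{\mathcal V}^1_1x^1\ot\tilde v^2\tilde{\mathcal V}^1_2x^2\ot\tilde{\mathcal V}^2x^3=(\tilde V\ot 1_H)(\Delta\ot\Id_H)(\tilde V)\Phi^{-1}$, with $\Phi^{-1}$ already on the right, so \eqref{eq:4.5} applies directly, then \eqref{eq:4.3} with $h=\tilde v^2S(X^3)_2\bm{e}_l$ finishes, and your fallback of conjugating via \eqref{eq:2.1} is not needed.
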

	
	\begin{proof}
		As in Theorem~\ref{thm:4.4}, Lemma~\ref{lem:4.3} implies that the parenthesization
		in the triple products is unambiguous in the present situation.
		
		We first prove \eqref{eq:4.8}.
		\begin{align*}
			\bar{W}_{23}\bar{W}_{13}\bar{W}_{12}
			&=
			\sum_{i,j,k \in I} \Bigl(
			(1_H \# \varepsilon \ot\bm{e}_i \# \varepsilon \ot 1_H \# \bm{e}^i)
			(\bm{e}_j \# \varepsilon \ot 1_H \# \varepsilon \ot 1_H \# \bm{e}^j)
			\Bigr)
			(\bm{e}_k \# \varepsilon \ot 1_H \# \bm{e}^k \ot 1_H \# \varepsilon )\\
			&=
			\sum_{i,j,k \in I} \Bigl(
			\bm{e}_j \# \varepsilon \ot\bm{e}_i \# \varepsilon \ot x^1 \# (\bm{e}^i\leftharpoonup x^2)\ast(\bm{e}^j \leftharpoonup x^3)
			\Bigr)
			(\bm{e}_k \# \varepsilon \ot 1_H \# \bm{e}^k \ot 1_H \# \varepsilon )\\
			&=
			\sum_{i,j,k,l \in I} \bm{e}_j\bm{e}_k \# \varepsilon \ot\bm{e}_i \# \bm{e}^k
			\ot x^1 \# \bm{e}^i(x^2(\bm{e}_l)_1)\bm{e}^j(x^3(\bm{e}_l)_2)\bm{e}^l\\
			&=
			\sum_{k,l \in I} x^3(\bm{e}_l)_2\bm{e}_k \# \varepsilon \ot x^2(\bm{e}_l)_1 \# \bm{e}^k \ot x^1 \# \bm{e}^l.
		\end{align*}
		On the other hand,
		\begin{align*}
			(\overline{\boldsymbol{\Phi}}^{-1})^{321}\bar{W}_{12}\bar{W}_{23}
			&=
			\sum_{i,j \in I} (x^3 \# \varepsilon \ot x^2 \# \varepsilon \ot x^1 \# \varepsilon)
			\Bigl(
			(\bm{e}_i \# \varepsilon \ot 1_H \# \bm{e}^i \ot 1_H \# \varepsilon )
			(1_H \# \varepsilon \ot\bm{e}_j \# \varepsilon \ot 1_H \# \bm{e}^j)
			\Bigr)\\
			&=
			\sum_{i,j \in I} (x^3 \# \varepsilon \ot x^2 \# \varepsilon \ot x^1 \# \varepsilon)
			(\bm{e}_i \# \varepsilon \ot (\bm{e}_j)_1 \# (\bm{e}^i \leftharpoonup (\bm{e}_j)_2) \ot 1_H \# \bm{e}^j)\\
			&=
			\sum_{i,j \in I} x^3\bm{e}_i \# \varepsilon \ot x^2(\bm{e}_j)_1 \# (\bm{e}^i \leftharpoonup (\bm{e}_j)_2) \ot x^1 \# \bm{e}^j\\
			&=
			\sum_{i,j \in I} x^3(\bm{e}_j)_2\bm{e}_i \# \varepsilon \ot x^2(\bm{e}_j)_1 \# \bm{e}^i \ot x^1 \# \bm{e}^j.
		\end{align*}
		Thus \eqref{eq:4.8} holds.
		
		We next prove \eqref{eq:4.9}.
		\begin{align*}
			\hat{W}_{12}\hat{W}_{13}\hat{W}_{23}
			&=
			\sum_{i,j,k \in I} \Bigl(
			(S(\tilde{V}^2 \bm{e}_i)\# \varepsilon \ot \tilde{V}^1 \# \bm{e}^i \ot 1_H \# \varepsilon)
			(S(\tilde{v}^2 \bm{e}_j)\# \varepsilon \ot 1_H \# \varepsilon \ot \tilde{v}^1 \# \bm{e}^j)
			\Bigr)\\
			&\qquad\times(1_H \# \varepsilon \ot S(\tilde{\mathcal{V}}^2 \bm{e}_k)\# \varepsilon \ot  \tilde{\mathcal{V}}^1 \# \bm{e}^k)\\
			&=
			\sum_{i,j,k \in I}
			\Bigl(
			S(\tilde{v}^2 \bm{e}_j\tilde{V}^2 \bm{e}_i)\# \varepsilon \ot \tilde{V}^1 \# \bm{e}^i \ot \tilde{v}^1 \# \bm{e}^j
			\Bigr)
			(\varepsilon \# 1_H \ot S(\tilde{\mathcal{V}}^2 \bm{e}_k)\# \varepsilon \ot  \tilde{\mathcal{V}}^1 \# \bm{e}^k)\\
			&=
			\sum_{i,j,k \in I}
			S(\tilde{v}^2 \bm{e}_j\tilde{V}^2 \bm{e}_i)\# \varepsilon
			\ot \tilde{V}^1S(\tilde{\mathcal{V}}^2 \bm{e}_k)_1 \# (\bm{e}^i \leftharpoonup S(\tilde{\mathcal{V}}^2 \bm{e}_k)_2)\\
			&\qquad\ot \tilde{v}^1\tilde{\mathcal{V}}^1_1x^1 \# (\bm{e}^j \leftharpoonup \tilde{\mathcal{V}}^1_2x^2)\ast(\bm{e}^k \leftharpoonup x^3)\\
			&=
			\sum_{i,j,k,l \in I}
			S(\tilde{v}^2 \bm{e}_j\tilde{V}^2 S(\tilde{\mathcal{V}}^2 \bm{e}_k)_2\bm{e}_i)\# \varepsilon
			\ot \tilde{V}^1S(\tilde{\mathcal{V}}^2 \bm{e}_k)_1 \# \bm{e}^i\\
			&\qquad\ot \tilde{v}^1\tilde{\mathcal{V}}^1_1x^1 \# \bm{e}^j(\tilde{\mathcal{V}}^1_2x^2(\bm{e}_l)_1)\bm{e}^k(x^3(\bm{e}_l)_2)\bm{e}^l\\
			&=
			\sum_{i,l \in I}
			S(\tilde{v}^2 \tilde{\mathcal{V}}^1_2x^2(\bm{e}_l)_1\tilde{V}^2 S(\tilde{\mathcal{V}}^2 x^3(\bm{e}_l)_2)_2\bm{e}_i)\# \varepsilon
			\ot \tilde{V}^1S(\tilde{\mathcal{V}}^2 x^3(\bm{e}_l)_2)_1 \# \bm{e}^i \ot \tilde{v}^1\tilde{\mathcal{V}}^1_1x^1 \# \bm{e}^l\\
			&\eqbyref{eq:4.5}
			\sum_{i,l \in I}
			S(X^1\tilde{v}^2_1S(X^3)_{(2,1)}(\bm{e}_l)_1\tilde{V}^2 S(X^2\tilde{v}^2_2S(X^3)_{(2,2)}(\bm{e}_l)_2)_2\bm{e}_i)\# \varepsilon\\
			&\qquad\ot \tilde{V}^1S(X^2\tilde{v}^2_2S(X^3)_{(2,2)}(\bm{e}_l)_2)_1 \# \bm{e}^i\ot \tilde{v}^1S(X^3)_1 \# \bm{e}^l\\
			&=
			\sum_{i,l \in I}
			S(X^1(\tilde{v}^2S(X^3)_2\bm{e}_l)_1\tilde{V}^2 S((\tilde{v}^2S(X^3)_2\bm{e}_l)_2)_2S(X^2)_2\bm{e}_i)\# \varepsilon\\
			&\qquad\ot \tilde{V}^1S((\tilde{v}^2S(X^3)_2\bm{e}_l)_2)_1S(X^2)_1 \# \bm{e}^i\ot \tilde{v}^1S(X^3)_1 \# \bm{e}^l\\
			&\eqbyref{eq:4.3}
			\sum_{i,l \in I}
			S(X^1\tilde{V}^2 S(X^2)_2\bm{e}_i)\# \varepsilon \ot S(\tilde{v}^2S(X^3)_2\bm{e}_l)\tilde{V}^1S(X^2)_1 \# \bm{e}^i\ot \tilde{v}^1S(X^3)_1 \# \bm{e}^l.
		\end{align*}
		
		On the other hand,
		\begin{align*}
			\hat{W}_{23}\hat{W}_{12}\overline{\boldsymbol{\Phi}}_S
			&=
			\sum_{i,j \in I} \Bigl(
			(1_H \# \varepsilon \ot S(\tilde{V}^2 \bm{e}_i)\# \varepsilon \ot  \tilde{V}^1 \# \bm{e}^i)
			(S(\tilde{v}^2 \bm{e}_j)\# \varepsilon \ot \tilde{v}^1 \# \bm{e}^j \ot 1_H \# \varepsilon)
			\Bigr)\\
			&\qquad\times(S(X^1) \# \varepsilon \ot S(X^2) \# \varepsilon \ot S(X^3) \# \varepsilon)\\
			&=
			\sum_{i,j \in I}
			\Bigl(
			S(\tilde{v}^2 \bm{e}_j)\# \varepsilon \ot S(\tilde{V}^2 \bm{e}_i)\tilde{v}^1\# \bm{e}^j \ot  \tilde{V}^1 \# \bm{e}^i
			\Bigr)
			(S(X^1) \# \varepsilon \ot S(X^2) \# \varepsilon \ot S(X^3) \# \varepsilon)\\
			&=
			\sum_{i,j \in I}
			S(X^1\tilde{v}^2 \bm{e}_j)\# \varepsilon \ot S(\tilde{V}^2 \bm{e}_i)\tilde{v}^1S(X^2)_1\# (\bm{e}^j \leftharpoonup S(X^2)_2)
			\ot  \tilde{V}^1S(X^3)_1 \# (\bm{e}^i\leftharpoonup S(X^3)_2)\\
			&=
			\sum_{i,j \in I}
			S(X^1\tilde{v}^2 S(X^2)_2\bm{e}_j)\# \varepsilon \ot S(\tilde{V}^2 S(X^3)_2\bm{e}_i)\tilde{v}^1S(X^2)_1\# \bm{e}^j
			\ot  \tilde{V}^1S(X^3)_1 \# \bm{e}^i.
		\end{align*}
		Thus \eqref{eq:4.9} holds.
	\end{proof}
	
	\section{Explicit examples for twisted Heisenberg doubles}\label{sec:twisted}
	
	In this section we present explicit examples of the quasi-pentagon equation, the quasi-Hopf equation, and the quasi-inverses for twisted Heisenberg doubles associated with finite groups and normalized $3$-cocycles. We also explain that, unlike in the Hopf case, the canonical element may fail to admit an inverse. We refer to Dijkgraaf, Pasquier, and Roche \cite{DPR} for the associated twisted quantum double $D^{\omega}(G)$. Throughout this section, let $G$ be a finite group and let $\omega$ be a normalized $3$-cocycle on $G$. Then, for all $a,b,c,d\in G$:
	\begin{equation}\label{eq:5.1}
		\omega(a,b,c)\omega(a,bc,d)\omega(b,c,d)=\omega(ab,c,d)\omega(a,b,cd),
	\end{equation}
	and $\omega(a,b,c)=1$ whenever at least one of $a,b,c$ is the identity element.

	Let $k[G]$ be the group algebra of $G$ over $k$, and let $k(G)$ be the algebra of $k$-valued functions on $G$. For $a\in G$, let $\delta_a\in k(G)$ be the delta function defined by $\delta_a(b)=\delta_{a,b}$. Note that the identity element of the function algebra $k(G)$ is given by $1_{k(G)} = \sum_{a \in G} \delta_a$. We write $k^{\omega}(G)$ for the vector space $k(G)$ endowed with the following quasi-Hopf algebra structure:
	\begin{equation*}
	\begin{aligned}
		\delta_a \delta_b &= \delta_{a,b}\delta_b,\qquad \Delta(\delta_a)=\sum_{g,h \in G, \; a=gh}\delta_g\otimes\delta_h,\qquad \varepsilon(\delta_a)=\delta_a(1_G), \\
		\Phi &= \sum_{a,b,c \in G} \dfrac{1}{\omega(a,b,c)}\delta_a \otimes \delta_b \otimes \delta_c,\qquad \Phi^{-1}=\sum_{a,b,c \in G} \omega(a,b,c)\delta_a \otimes \delta_b \otimes \delta_c, \\
		S(\delta_a)&=\delta_{a^{-1}},\qquad \alpha = 1_{k(G)},\qquad \beta=\sum_{a \in G}\omega(a,a^{-1},a)\delta_a.
	\end{aligned}
\end{equation*}
	We write
	\[
	\HD_1^{\omega}(G^{\ast}) := \HD_1\bigl({k^{\omega}(G)}^{\ast}\bigr),
	\qquad
	\HD_1^{\omega}(G) := \HD_1\bigl(k^{\omega}(G)\bigr),
	\]
	and refer to them as twisted Heisenberg doubles. As vector spaces, there are canonical identifications
	\[
	\HD_1^{\omega}(G^{\ast})\cong k[G]\otimes k(G),
	\qquad
	\HD_1^{\omega}(G)\cong k(G)\otimes k[G].
	\]
	Under the first identification, we write elements as $g\#\delta_a$, while under the second identification, we write them as $\delta_a\# g$.
	
	The multiplication on $\HD_1^{\omega}(G^{\ast})$ is given by
	\[
	(g\# \delta_a)(h\# \delta_b)=\delta_{a,hb}\omega(g,h,b)gh\#\delta_b,
	\]
	and the right $k^{\omega}(G)$-action is given by
	\[
	(g\#\delta_a)\triangleleft \delta_b=\delta_{b,g}g\#\delta_a.
	\]
	Similarly, the multiplication on $\HD_1^{\omega}(G)$ is given by
	\[
	(\delta_a \# g)(\delta_b \# h)=\delta_{b,ag}\omega(a,g,h)\delta_a\# gh,
	\]
	and the left $k^{\omega}(G)$-action is given by
	\[
	\delta_b \triangleright (\delta_a \# g)=\delta_{b,g}\delta_a\# g.
	\]
	
	For $k^{\omega}(G)$, the elements $U=U^1\ot U^2$ and $\tilde{V}=\tilde{V}^1\ot \tilde{V}^2$ defined in Lemma~\ref{lem:4.1} are given by
	\begin{equation*}
	\begin{aligned}
		U &= U^1\otimes U^2 = \sum_{a,b \in G}\frac{1}{\omega(b^{-1}a^{-1},a,b)} \,\delta_a \otimes \delta_b,\\
		\tilde{V} &= \tilde{V}^1\otimes \tilde{V}^2 = \sum_{a,b \in G}\frac{\omega(b^{-1}a^{-1},a,b)}{\omega(b^{-1},a^{-1},a)\omega(b^{-1},b,b^{-1}a^{-1})}\,\delta_a \otimes \delta_b.
	\end{aligned}
\end{equation*}
	Under the above identifications, the counit of $k^\omega(G)$ corresponds to $1_G \in k[G]$, while the unit of $k^\omega(G)$ is $1_{k(G)}$. Hence the canonical element $W\in {\HD_1^{\omega}(G^{\ast})}^{\ot 2}$ and its quasi-inverse $\tilde{W}$ are given by
	\begin{equation*}
	\begin{aligned}
		W&=\sum_{g \in G}(1_G\# \delta_g) \otimes (g\# 1_{k(G)}),\\
		\tilde{W}&=\sum_{a,b \in G}\frac{1}{\omega(b^{-1}a,a^{-1},b)}(1_G\# \delta_a) \otimes (a^{-1}\# \delta_b).
	\end{aligned}
\end{equation*}
	Likewise, the canonical element $\bar{W}\in {\HD_1^{\omega}(G)}^{\ot 2}$ and its quasi-inverse $\hat{W}$ are
	\begin{equation*}
	\begin{aligned}
		\bar{W}&=\sum_{g \in G}(\delta_g\# 1_G) \otimes (1_{k(G)}\# g),\\
		\hat{W}&=\sum_{a,b \in G}\frac{\omega(ab^{-1},b,a^{-1})}{\omega(a,b^{-1},b)\omega(a,a^{-1},ab^{-1})}(\delta_a\# 1_G) \otimes (\delta_b \# a^{-1}).
	\end{aligned}
\end{equation*}
	Finally,
	\begin{equation*}
	\begin{aligned}
		\boldsymbol{\Phi}^{-1}
		&=\sum_{a,b,c \in G}\omega(a,b,c)1_G \# \delta_a \otimes 1_G \# \delta_b \otimes 1_G \# \delta_c,\\
		\boldsymbol{\Phi}^{321}_{S}
		&=\sum_{a,b,c \in G}\frac{1}{\omega(c^{-1},b^{-1},a^{-1})}1_G \# \delta_a \otimes 1_G \# \delta_b \otimes 1_G \# \delta_c,\\
		(\overline{\boldsymbol{\Phi}}^{-1})^{321}
		&=\sum_{a,b,c \in G}\omega(c,b,a)\delta_a\# 1_G \otimes \delta_b \# 1_G \otimes \delta_c \# 1_G,\\
		\overline{\boldsymbol{\Phi}}_S
		&=\sum_{a,b,c \in G}\frac{1}{\omega(a^{-1},b^{-1},c^{-1})}\delta_a\# 1_G \otimes \delta_b \# 1_G \otimes \delta_c \# 1_G.
	\end{aligned}
\end{equation*}
	In particular, $\tilde{W}$ need not be the inverse of $W$, and $\hat{W}$ need not be the inverse of $\bar{W}$.

	Theorems~\ref{thm:4.4} and \ref{thm:4.5} specialize to explicit quasi-pentagon and quasi-Hopf equations for twisted Heisenberg doubles. For example, \eqref{eq:4.9} takes the form
	{\small
	\begin{align*}
		\hat{W}_{12}\hat{W}_{13}\hat{W}_{23}
		&=\sum_{a,b,c \in G}\frac{\omega(ab^{-1},b,a^{-1})\omega(ac^{-1},c,a^{-1})\omega(bc^{-1},ca^{-1},ab^{-1})\omega(c,a^{-1},ab^{-1})}{\omega(a,b^{-1},b)\omega(a,a^{-1},ab^{-1})\omega(a,c^{-1},c)\omega(a,a^{-1},ac^{-1})\omega(ba^{-1},ac^{-1},ca^{-1})\omega(ba^{-1},ab^{-1},bc^{-1})}\\
		&\qquad \times \delta_a \# 1_G \otimes \delta_b \# a^{-1} \otimes \delta_c \# b^{-1},\\
		\hat{W}_{23}\hat{W}_{12}\overline{\boldsymbol{\Phi}}_S
		&=\sum_{a,b,c \in G}\frac{\omega(bc^{-1},c,b^{-1})\omega(ab^{-1},b,a^{-1})}{\omega(b,c^{-1},c)\omega(b,b^{-1},bc^{-1})\omega(a,b^{-1},b)\omega(a,a^{-1},ab^{-1})\omega(a^{-1},ab^{-1},bc^{-1})}\\
		&\qquad \times \delta_a \# 1_G \otimes \delta_b \# a^{-1} \otimes \delta_c \# b^{-1}.
	\end{align*}
	}
	These two expressions agree by repeated use of the $3$-cocycle identity \eqref{eq:5.1} and normalization.

\begin{rem}
		Suppose that $W$ is invertible, with inverse $V$. Then a straightforward argument shows that there exists a function $\varphi\co G\times G\to k$ such that
		\[
		V=\sum_{a,b\in G} \varphi(a,b)1_G\# \delta_a \otimes a^{-1}\# \delta_b.
		\]
		Computing $WV$ and $VW$, we obtain
		\begin{align*}
			WV&=\sum_{g,a,b\in G}\varphi(a,b)1_G \# \delta_g \delta_a\otimes \omega(g,a^{-1},b)ga^{-1}\#\delta_b\\
			&=\sum_{a,b\in G}\varphi(a,b)\omega(a,a^{-1},b)1_G\#\delta_a \otimes 1_G \# \delta_b,\\
			VW&=\sum_{g,a,b\in G}\varphi(a,b)1_G \# \delta_a\delta_g\otimes \omega(a^{-1},g,g^{-1}b)a^{-1}g\#\delta_{g^{-1}b}\\
			&=\sum_{a,b\in G}\varphi(a,b)\omega(a^{-1},a,a^{-1}b)1_G\#\delta_a \otimes 1_G \# \delta_{a^{-1}b}\\
			&=\sum_{a,b'\in G}\varphi(a,ab')\omega(a^{-1},a,b')1_G\#\delta_a \otimes 1_G \# \delta_{b'}.
		\end{align*}
		Hence
		\[
		\varphi(a,b)=\frac{1}{\omega(a,a^{-1},b)},
		\qquad
		\varphi(a,ab)=\frac{1}{\omega(a^{-1},a,b)}.
		\]
		Therefore,
		\[
		\varphi(a,ab)=\frac{1}{\omega(a,a^{-1},ab)}=\frac{1}{\omega(a^{-1},a,b)},
		\]
		so that $\omega(a,a^{-1},ab)=\omega(a^{-1},a,b)$ must hold. Since $\omega$ is a normalized $3$-cocycle, we have
		\[
		\omega(a,a^{-1},a)\omega(a^{-1},a,b)=\omega(a,a^{-1},ab).
		\]
		Thus, if $V$ is to be the inverse of $W$, it is necessary that $\omega(a,a^{-1},a)=1$ for every $a\in G$. In particular, this shows that for the first Heisenberg double $\HD_1(H^\ast)$ of a general finite-dimensional quasi-Hopf algebra $H$, the canonical element $W$ may fail to be invertible.
		
		For instance, let $G=\mathbb{Z}/n\mathbb{Z}= \{\overline{0}, \overline{1}, \dots, \overline{n-1}\}$ ($n \ge 2$), written additively, be the cyclic group of order $n$, and take $k=\mathbb{C}$. With the $3$-cocycle $\omega(\overline{a}, \overline{b}, \overline{c}) = \exp\left( \frac{2\pi \sqrt{-1}}{n} abc \right)$, we have $\omega(\overline{1},\overline{n-1},\overline{1}) = \exp\left( \frac{2\pi \sqrt{-1}(n-1)}{n}\right) \neq 1$. This provides a concrete example where $W$ is not invertible.
		
		By a completely analogous argument, if $\bar{W}$ were invertible with inverse $\bar{V}$, then there would exist a function $\psi\co G\times G \to k$ such that
		\[
		\bar{V}=\sum_{a,b \in G}\psi(a,b)\delta_a \# 1_G \otimes \delta_b \# a^{-1}.
		\]
		Comparing $\bar{W}\bar{V}$ and $\bar{V}\bar{W}$, we obtain
		\[
		\psi(a,ba)=\frac{1}{\omega(b,a,a^{-1})},
		\qquad
		\psi(a,b)=\frac{1}{\omega(b,a^{-1},a)}.
		\]
		Hence
		\[
		\omega(ba,a^{-1},a)=\omega(b,a,a^{-1}),
		\]
		and since $\omega$ is a normalized $3$-cocycle, this again implies that $\omega(a,a^{-1},a)=1$ for every $a\in G$. Therefore, the canonical element $\bar{W}$ of $\HD_1(H)$ may also fail to be invertible. The cyclic example above provides such an example as well.
	\end{rem}
	
	\begin{acknowledgment}
		The author is grateful to Sakie Suzuki, Kenichi Shimizu, and Michihisa Wakui for valuable advice and helpful discussions.
	\end{acknowledgment}
	

\end{document}